\newtheorem{thm}{Theorem}
\crefname{thm}{Theorem}{Theorems}
\Crefname{thm}{Theorem}{Theorems}
\newtheorem{lem}[thm]{Lemma}
\crefname{lem}{Lemma}{Lemmas}
\Crefname{lem}{Lemma}{Lemmas}
\newtheorem{prop}[thm]{Proposition}
\crefname{prop}{Proposition}{Propositions}
\Crefname{prop}{Proposition}{Propositions}
\crefname{cor}{Corollary}{Corollaries}
\Crefname{cor}{Corollary}{Corollaries}
\newtheorem{conj}[thm]{Conjecture}
\crefname{conj}{Conjecture}{Conjectures}
\Crefname{conj}{Conjecture}{Conjectures}
\newtheorem{qn}[thm]{Question}
\crefname{qn}{Question}{Questions}
\Crefname{qn}{Question}{Questions}
\newtheorem{obs}[thm]{Observation}
\theoremstyle{definition}
\newcommand{\No}{\mathbb{Z}^{\ge0}}
\newcommand{\Z}{\mathbb{Z}}
\newcommand{\R}{\mathbb{R}}
\newcommand{\F}{\mathcal{F}}
\newcommand{\es}{\emptyset}
\newcommand{\rpart}[1]{$#1$-partition}
\newcommand{\modpart}[1]{$(1\text{ mod }#1)$-partition}
\begin{document}

\title{Decomposing the vertex set of a hypercube into isomorphic subgraphs}

\author{%
    Vytautas Gruslys%
    \thanks{%
        Department of Pure Mathematics and Mathematical Statistics,
        University of Cambridge,
        Wilberforce Road,
        CB3\,0WA Cambridge,
        United Kingdom;
        e-mail: \texttt{v.gruslys@cam.ac.uk}\,.
    }
}

\maketitle

\begin{abstract}
    Let $G$ be an induced subgraph of the hypercube $Q_k$ for some $k$. We show
    that if $|G|$ is a power of $2$ then, for sufficiciently large $n$, the
    vertex set of $Q_n$ can be partitioned into induced copies of $G$. This
    answers a question of Offner.
    In fact, we prove a stronger statement: if $X$ is a subset of $\{0,1\}^k$
    for some $k$ and if $|X|$ is a power of $2$, then, for sufficiently large
    $n$, $\{0,1\}^n$ can be partitioned into isometric copies of $X$.
\end{abstract}

\section{Introduction} \label{sec:introduction}

A famous theorem of Wilson~\cite{Wilson1976} states that, for any finite graph
$H$ and for any sufficiently large integer $n$ which satisfies certain
divisibility conditions, the edges of the complete graph $K_n$ can be covered by
disjoint copies of $H$.  Such a cover is called an \emph{$H$-decomposition} of
$K_n$. The divisibility conditions required by Wilson's theorem are obviously
necessary for an $H$-decomposition of $K_n$ to exist: $\binom{n}{2}$ must be
divisible by $e(H)$ and $n-1$ must be divisible by the highest common factor of
the degrees of the vertices of $H$. Therefore, as long as we are only interested
in large $n$, Wilson's theorem tells us exactly when $K_n$ admits an
$H$-decomposition.  On the other hand, the general question of determining
whether an arbitrary graph $G$ has a $H$-decomposition is very difficult, and
various special cases of this question have attracted significant attention.

In this paper we examine a related question: we are concerned with partitioning
the vertices -- not edges -- of a given graph $G$ into copies of $H$. More
precisely, for finite graphs $G,H$, we say that a set $A \subset V(G)$ is an
\emph{$H$-set} if the induced subgraph $G[A]$ is isomorphic to $H$. We consider
the following question: can $V(G)$ be partitioned into $H$-sets?

In contrast to Wilson's theorem, this question is not interesting in the case
where $G$ is a complete graph: obviously, $V(K_n)$ can be partitioned into
$H$-sets if and only if $H = K_m$ where $m$ divides $n$. Instead, we focus on
the case where $G$ is the hypercube $Q_n$, that is, the graph with vertex set
$\{0,1\}^n$ where two $n$-tuples are adjacent if and only if they differ in
precisely one entry.

Let $H$ be a finite graph and let $n$ be large. Can we quickly determine whether
$V(Q_n)$ can be partitioned into $H$-sets? Of course, there is an obvious
necessary divisibility condition: $|H|$ must be a power of $2$. Moreover, this
condition alone is not sufficient because $H$ may not be isomorphic to any
induced subgraph of any hypercube $Q_n$. For example, $H$ could be a
non-bipartite graph or, say, it could be a bipartite graph, of size a power of
$2$, that contains $K_{3,2}$ as a subgraph.  Note $K_{3,2}$ is not a subgraph of
any $Q_n$ since any two vertices that are distance $2$ apart in $Q_n$ are joined
by precisely two paths of length $2$.  Therefore, we should require $H$ to be an
induced subgraph of some hypercube.  Offner~\cite{Offner2014} considered this
problem in connection with coding theory. He asked if this condition together
with the divisibility condition is sufficient.

\begin{qn}[Offner] \label{qn:offner}
    Let $H$ be an induced subgraph of $Q_k$ for some $k$ and suppose that $|H|$
    is a power of $2$. Must it be true that, for any sufficiently large $n$,
    $V(Q_n)$ can be partitioned into $H$-sets?
\end{qn}

This question bears resemblence to the celebrated work of Hamming~%
\cite{Hamming1950} on error-correcting codes. Indeed, a perfect
single-error-correcting code is a partition of $V(Q_n)$ into $K_{1,n}$-sets.
Hamming showed that such a partition exists if and only if a natural
divisibility condition is satisfied, namely, if $n = 2^r - 1$ for some $r$. Much
later, Rogers~\cite{RogersUnpublished,Ramras1992} asked if it is possible to
partition the vertices of $Q_n$ into antipodal paths, subject to the same
divisibility condition. Here an \emph{antipodal path} is a path of length $n$
which starts and ends at two diagonally opposite vertices of $Q_n$. Rogers'
question was answered by Ramras~\cite{Ramras1992}, who proved the following more
general result: if $n = 2^r - 1$ and if $T$ is a tree on $n+1$ vertices which is
an induced subgraph of $Q_n$, then $V(Q_n)$ can be partitioned into isometric
copies of $T$.

On the other hand, there is an important difference between Offner's question
and the work of Hamming and Ramras: in Offner's question $H$ is fixed and $n$
can be taken to be large.  In fact, Offner's question is more closely related to
the following two conjectures coming from different areas of combinatorics. One
was proposed by Chalcraft~\cite{ChalcraftI, ChalcraftII}.

\begin{conj}[Chalcraft] \label{conj:chalcraft}
    Let $T$ be a non-empty finite subset of $\Z^k$ for some $k$, where $\Z^k$ is
    treated as a subspace of the metric space $\R^k$. Then, for
    sufficiently large $n$, the space $\Z^n$ can be partitioned into isometric
    copies of $T$.
\end{conj}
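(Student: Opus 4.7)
The plan is to build an explicit periodic tile: find an integer $M$ and a dimension $n$ large enough that the axis-aligned cube $B_n = \{0,1,\dots,M-1\}^n \subset \Z^n$ can itself be partitioned into isometric copies of $T$. Since $B_n$ is a fundamental domain for the sublattice $M\Z^n$, its translates automatically cover $\Z^n$ exactly once, so a partition of $B_n$ yields the desired partition of $\Z^n$.

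The first step is to pin down what an isometric copy of $T$ in $\Z^n$ actually is. An orthogonal map of $\R^n$ that sends $\Z^n$ to itself must send standard basis vectors to $\pm$ basis vectors, so every isometry of $\Z^n$ is a composition of a translation with a signed permutation of the coordinates. After embedding $T \subset \Z^k \hookrightarrow \Z^n$ by zero-padding (for $n \ge k$), an isometric copy of $T$ is therefore a set of the form $\sigma(T) + v$ with $\sigma$ a signed permutation and $v \in \Z^n$. This gives us a family of $2^n n!$ orientations to work with at each base point, and the plan is to exploit this growing flexibility as $n$ increases.

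The second step is a product-type reduction. I would prove a lemma of the form: if $\Z^{n_1}$ partitions into copies of $T_1$ and $\Z^{n_2}$ into copies of $T_2$, then $\Z^{n_1+n_2}$ partitions into copies of the Cartesian product $T_1 \times T_2$. Using this, one tries to decompose the target behavior inductively: first solve the problem for very simple shapes (for example, $T$ equal to a single axis-aligned segment $\{0,1,\dots,m-1\}$, which trivially tiles $\Z$), and then combine these via products, sign changes and permutations to realize an arbitrary $T$. Choose $M$ (say $M = |T|$ or a suitable multiple) so that $|T|$ divides $M^n$, guaranteeing the necessary divisibility.

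The main obstacle will be the partitioning step for a general $T$ that does not factor nicely as a product. A generic $T \subset \Z^k$ can have an irregular distance profile, and isometric images of $T$ must preserve it exactly -- we cannot deform. The delicate part is assembling signed-permuted translates of $T$ into a cube without gaps or overlaps; I expect this to require either a clever recursive peeling (removing one copy of $T$ at a time from the cube while controlling what remains) or a global algebraic argument exploiting a finite abelian group structure on $B_n$, so that a well-chosen subgroup of translations together with the signed-permutation action generates a regular orbit partition. Making such a construction work uniformly in $T$ -- rather than case by case -- is, I believe, where the real difficulty of Chalcraft's conjecture lies.
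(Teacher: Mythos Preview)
The paper does not prove this statement: it is presented as Chalcraft's conjecture and the paper merely cites its resolution by Gruslys, Leader and Tan \cite{Gruslys2016I}. There is no proof in the paper to compare your proposal against.

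Independently of that, your proposal is not a proof, and you say so yourself. Your plan is to partition a finite box $\{0,\dots,M-1\}^n$ into isometric copies of $T$ and then extend by $M\Z^n$-periodicity. The periodic extension is fine, and your description of the isometries of $\Z^n$ as translations composed with signed permutations is correct. But the entire content of the conjecture is the existence of that finite partition, and for this you offer only a divisibility check and a product lemma that applies only to $T$ which split as Cartesian products. For a generic $T$ you explicitly concede that ``making such a construction work uniformly in $T$ \dots\ is where the real difficulty of Chalcraft's conjecture lies.'' That is the gap, and nothing in the outline closes it.

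For orientation: the method that actually settles Chalcraft's conjecture, and which this paper adapts to $\{0,1\}^n$, does \emph{not} attempt a direct partition of a box. Instead one constructs two weaker weighted covers --- an $r$-partition and a \modpart{r} --- and then applies a general combination theorem (\Cref{thm:blackbox} here) to upgrade these to a genuine partition in a sufficiently high power. Your sketch neither follows this route nor provides an alternative mechanism for the decisive step.
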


Another related conjecture was proposed by Lonc~\cite{Lonc1991}.

\begin{conj}[Lonc] \label{conj:lonc}
    Let $P$ be a poset with a greatest and a least element. If $|P|$ is a power
    of $2$, then, for sufficiently large $n$, the Boolean lattice $2^{[n]}$ can
    be partitioned into sets, each of which induces a poset isomorphic to $P$.
\end{conj}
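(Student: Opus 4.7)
My plan is to derive Lonc's conjecture from a strengthened (monotone) version of the paper's main theorem. Since $P$ has a least element $\hat{0}$ and a greatest element $\hat{1}$, there is an order-embedding $\iota : P \to 2^{[k]}$ for some $k$ with $\iota(\hat{0}) = \emptyset$ and $\iota(\hat{1}) = [k]$; for example, one can take $\iota(p) = \{q \in P \setminus \{\hat{1}\} : p \not\leq q\}$. Write $X = \iota(P) \subseteq \{0,1\}^k$; note that $|X| = |P|$ is a power of $2$. I call a subset $Y \subseteq 2^{[n]}$ a \emph{monotone copy of $X$} if there exist an injection $\sigma : [k] \hookrightarrow [n]$ and a fixed $S \subseteq [n] \setminus \sigma([k])$ with $Y = \{\sigma(x) \cup S : x \in X\}$, where $\sigma(x)$ denotes the image of $x$ (viewed as a subset of $[k]$) under $\sigma$. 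Because $\sigma(u) \cup S \subseteq \sigma(v) \cup S$ iff $u \subseteq v$ iff $\iota^{-1}(u) \leq_P \iota^{-1}(v)$, each monotone copy induces a subposet of $2^{[n]}$ isomorphic to $P$.

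Hence it suffices to prove the following monotone strengthening of the main theorem: for every $X \subseteq \{0,1\}^k$ with $|X|$ a power of $2$ and every sufficiently large $n$, the set $\{0,1\}^n$ can be partitioned into monotone copies of $X$. My first attempt would be to re-examine the proof of the isometric partition theorem and verify whether the copies it constructs can be chosen without flipping any coordinate. If that proof proceeds by assembling small partitions into larger ones via gluing or induction on $n$, one might hope to propagate monotonicity throughout, since the basic building blocks can be oriented consistently from the start.

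The main obstacle is that this step is likely genuinely harder than the isometric version. Coordinate flips enlarge the family of available copies by a factor of $2^k$, and a flipped copy of a chain of length at least $3$ in $X$ becomes a V-shape or zig-zag pattern in $2^{[n]}$, which cannot be repaired by any relabelling. If the direct adaptation fails, a fallback would be to combine the isometric partition theorem with a subsequent reorientation argument: first partition $\{0,1\}^n$ into isometric copies of $X$ using the main theorem, then locally replace each non-monotone copy by grouping it with several neighbouring copies and re-partitioning the resulting region into monotone copies. Arranging such replacements globally, while respecting the divisibility condition $|X| \mid 2^n$, would be the crux of the problem.
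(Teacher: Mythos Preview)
The paper does not prove this statement. Lonc's conjecture is stated purely for context; the paper records that it was settled by Gruslys, Leader and Tomon and cites their work. There is no proof here to compare your proposal against.

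Your proposal is also not a proof---it is a plan with gaps you explicitly flag. The concrete point at which a direct adaptation of this paper's argument breaks is the \rpart{|X|} step (Observation~\ref{obs:rpart}). That construction uses the translates $X+p$, and translation by any nonzero $p\in\{0,1\}^n$ flips at least one coordinate, so none of these translates is a monotone copy. There is no cheap substitute: the order-automorphism group of the Boolean lattice $2^{[n]}$ is only the symmetric group $S_n$ acting on coordinates, which is not transitive on $\{0,1\}^n$, so no averaging-over-a-group argument can produce uniform multiplicity from monotone copies alone. Your fallback---apply Theorem~\ref{thm:main-isometric} and then locally repair the non-monotone copies---is left entirely unspecified, and you already observe the obstruction: a single coordinate flip turns a $3$-chain in $X$ into a $\Lambda$- or V-shape, and undoing one such defect will in general create others elsewhere.

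Note finally that the hypothesis that $P$ has a greatest and a least element (equivalently, $\emptyset$ and $[k]$ both lie in $X$ after your embedding) is essential to the truth of Lonc's conjecture, yet in your outline it is used only to set up the embedding $\iota$. Any correct argument must exploit this hypothesis in a more substantial way.
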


These conjectures were recently solved: Gruslys, Leader and Tan~%
\cite{Gruslys2016I} confirmed Chalcraft's conjecture and Gruslys, Leader and
Tomon~\cite{Gruslys2016II} confirmed Lonc's conjecture.  In this paper we
combine new ideas with tools developed by these authors to give a positive
answer to Offner's question.

\begin{restatable}{thm}{main} \label{thm:main}
    Let $H$ be an induced sugraph of $Q_k$ for some $k$. If $|H|$ is a power of
    $2$, then there exists a positive integer $n$ such that the vertices of
    $Q_n$ can be partitioned into $H$-sets.
\end{restatable}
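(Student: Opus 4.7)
The plan is to prove the stronger statement highlighted in the abstract: for any $X \subseteq \{0,1\}^k$ with $|X|$ a power of $2$, there exists $n$ such that $\{0,1\}^n$ admits a partition into isometric copies of $X$. This implies the theorem immediately, since an isometric copy $X' \subseteq \{0,1\}^n$ of $X$ preserves Hamming distance and hence adjacency in the hypercube, so $Q_n[X'] \cong Q_k[X] = H$ and $X'$ is an $H$-set.

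My first step is to replace $X$ with a thickened tile. Writing $|X| = 2^r$ and choosing a parameter $m$ to be fixed later, I work with $Y := X \times \{0,1\}^m \subseteq \{0,1\}^{k+m}$. Then $|Y| = 2^{r+m}$ is still a power of $2$, and $Y$ decomposes trivially into $2^m$ translates of $X$, one for each setting of the last $m$ coordinates. An isometric copy of $Y$ in $\{0,1\}^n$ therefore decomposes into $2^m$ isometric copies of $X$, so if some $\{0,1\}^n$ can be partitioned into isometric copies of $Y$, this refines at once to a partition into isometric copies of $X$. Replacing $X$ by $Y$ buys substantial extra freedom: the free-cube factor inside $Y$ has the full symmetric group acting on its coordinates, and coordinate permutations in $\{0,1\}^n$ can distribute these $m$ free coordinates over any $m$-subset of the $n$ available positions.

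The heart of the argument, and the main obstacle, is actually tiling some $\{0,1\}^n$ by isometric copies of $Y$. Here I would follow the template of the proofs of Chalcraft's and Lonc's conjectures by Gruslys--Leader--Tan and Gruslys--Leader--Tomon respectively. A prototypical plan is: first, construct a partition of an intermediate ``box'' $\{0,1\}^{n_0}$ into isometric copies of $Y$ by a direct combinatorial construction that exploits the free cube factor; second, tile a larger $\{0,1\}^n$ by cosets of an appropriately chosen subgroup of $\mathbb{F}_2^n$, each coset being a translated copy of $\{0,1\}^{n_0}$ that inherits the partition from the first step. The delicate step is the first one: copies of $Y$ must be slotted into $\{0,1\}^{n_0}$ with no gaps and no overlaps. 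I expect this to be handled by induction, either on $k$ or on $|X|$, the power-of-$2$ hypothesis on $|X|$ ensuring that all divisibility requirements are preserved at each stage. The technical work should closely parallel the arguments of the two cited papers, adapted to the specific group-theoretic and metric structure of the Boolean hypercube.
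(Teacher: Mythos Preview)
Your reduction to the isometric statement is correct and matches the paper. The thickening step $Y = X \times \{0,1\}^m$ is harmless (and indeed isometric copies of $Y$ split into isometric copies of $X$), but it does not buy what you claim. The problem of partitioning $\{0,1\}^n$ into isometric copies of $Y$ is not visibly easier than the original problem for $X$: the free cube factor sits on its own block of coordinates, and any isometric copy of $Y$ is just an isometric copy of $X$ cross a full subcube on some disjoint coordinate set. Nothing here creates the kind of ``slack'' that the chain-thickening in the Lonc argument produces.

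The real gap is in your two-step plan. Your ``first step'' is to partition some $\{0,1\}^{n_0}$ into isometric copies of $Y$ by a direct construction; but that \emph{is} the theorem. Your ``second step'' (tiling a larger cube by cosets of a subgroup isomorphic to $\{0,1\}^{n_0}$) is then vacuous, since once the first step is done you can take $n = n_0$. So the proposal never identifies a mechanism that converts approximate or weighted data into an exact partition.

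The paper's route is different in substance, not just in detail. It invokes the black-box theorem of Gruslys--Leader--Tomon: to partition some power $S^n$ into copies of members of a family $\mathcal{F}$, it suffices that $\mathcal{F}$ contain an \emph{$r$-partition} (a weighted cover in which every point has multiplicity exactly $r$) and a \emph{$(1 \bmod r)$-partition} (every point has multiplicity $\equiv 1 \pmod r$) of $S$, for some $r$. With $S = \{0,1\}^m$, $\mathcal{F}$ the isometric copies of $X$, and $r = |X|$, the $r$-partition is immediate by averaging over all translates. The entire content of the paper is the construction of the $(1 \bmod r)$-partition, done by induction on $k$: one uses the inductive hypothesis for $X_-$ to control one half of a hypercube, and then a combinatorial argument on directed edges of $\{0,1\}^{d+1}$ (with $r = 2^d$) to balance the multiplicities modulo $r$ across all fibres. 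None of this machinery appears in your proposal, and without it the argument does not close.
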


Of course, if the result holds for $n$, then it holds for all $n' \ge n$.
Therefore, \Cref{thm:main} answers \Cref{qn:offner}.

\section{Overview of the proof} \label{sec:overview}

It turns out that, in order to prove \Cref{thm:main}, it is convenient to view
the hypercube $Q_n$ as the metric space $\{0,1\}^n$ where the distance between
any two points $x,y \in \{0,1\}^n$, denoted $d(x,y)$, is equal to the number of
entries where $x$ and $y$ are different. With this definition, $d(x,y)$ equals
$1$ if and only if $x$ and $y$ are adjacent vertices of $Q_n$. If $H$ is an
induced subgraph of $Q_k$, then we can identify $H$ with a subset of
$\{0,1\}^k$. For any $n \ge k$, we say that a set $X \subset \{0,1\}^n$ is an
\emph{isometric copy of $H$} if there exists an isometry $\phi : \{0,1\}^k \to
\{0,1\}^n$ which maps $H$ to $X$. Clearly, any isometric copy of $H$ in
$\{0,1\}^n$ is an $H$-set, but an $H$-set need not be an isometric copy of $H$.

We deduce \Cref{thm:main} from the following slightly stronger result.

\begin{restatable}{thm}{main-isometric} \label{thm:main-isometric}
    Let $X$ be a subset of $\{0,1\}^k$ for some $k$. If $|X|$ is a power of $2$,
    then there exists a positive integer $n$ such that $\{0,1\}^n$ can be
    partitioned into isometric copies of $X$.
\end{restatable}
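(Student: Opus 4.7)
Theorem~\ref{thm:main} follows immediately from Theorem~\ref{thm:main-isometric}: if $\phi:\{0,1\}^k\to\{0,1\}^n$ is an isometry, then $d(\phi(x),\phi(y))=d(x,y)$ for all $x,y\in\{0,1\}^k$, so the induced subgraph on $\phi(X)$ in $Q_n$ is isomorphic to the induced subgraph $H$ on $X$ in $Q_k$. Consequently, any partition of $\{0,1\}^n$ into isometric copies of $X$ is in particular a partition of $V(Q_n)$ into $H$-sets.

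For Theorem~\ref{thm:main-isometric}, I plan to induct on the codimension $c=k-m$ of $X$ in its ambient cube, where $|X|=2^m$. The base case $c=0$ is trivial: then $X=\{0,1\}^k$ itself and, for any $n\ge k$, the partition of $\{0,1\}^n$ into the $2^{n-k}$ translates of $\{0,1\}^k\times\{0\}^{n-k}$ is a partition into isometric copies of $X$.

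For the inductive step $c\ge1$, the aim is to produce a \emph{doubling} $X^+\supseteq X$: a set of size $2|X|$ which is a disjoint union of two isometric copies of $X$ and which has codimension strictly smaller than $c$. By the inductive hypothesis applied to $X^+$, some $\{0,1\}^n$ partitions into isometric copies of $X^+$; refining each such copy into its two constituent copies of $X$ yields the required partition. The cleanest form of doubling would be to find an isometry $\phi$ of $\{0,1\}^k$ with $\phi(X)\cap X=\es$ and take $X^+=X\cup\phi(X)\subseteq\{0,1\}^k$, which automatically has codimension $c-1$.

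Constructing such a doubling in all cases will be the main difficulty. The direct approach fails in general: when $|X|=2^{k-1}$, it requires the complement $\{0,1\}^k\sm X$ to be isometric to $X$, and small examples readily show this can fail. The remedy I envisage is to first replace $X$ by a larger, tiling-equivalent set in a bigger ambient cube --- via a product-and-substitute construction in the spirit of the techniques developed in \cite{Gruslys2016I, Gruslys2016II} --- so that a suitable doubling becomes available while the codimension still decreases. Carrying out this replacement uniformly, and in particular handling the tight case $|X|=2^{k-1}$, is the principal technical obstacle of the proof.
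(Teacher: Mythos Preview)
Your reduction from \Cref{thm:main} to \Cref{thm:main-isometric} and your base case are fine, but the inductive step is not actually carried out. You correctly observe that the direct doubling $X^+ = X \cup \phi(X) \subset \{0,1\}^k$ can fail when $|X| = 2^{k-1}$ and $\{0,1\}^k \sm X$ is not isometric to $X$ --- for instance $X = \{000,001,010,111\} \subset \{0,1\}^3$, whose complement has a different multiset of pairwise distances --- but the remedy you sketch does not resolve this. Embedding $X$ into a larger ambient cube as $X \times \{0\}^{K-k}$ increases rather than decreases the codimension, and the alternative of first replacing $X$ by a ``tiling-equivalent'' set $X'$ of strictly smaller codimension is circular: producing such an $X'$ means packing disjoint isometric copies of $X$ into some cube at density strictly greater than $|X|/2^k$, which is at least as hard as the statement you are trying to prove by induction. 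Nothing in \cite{Gruslys2016I,Gruslys2016II} supplies this packing step in the hypercube setting, so as written the proposal is a plan with its main difficulty left open.

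The paper avoids the doubling obstruction entirely by taking a different route. Instead of a codimension induction, it applies \Cref{thm:blackbox}: it suffices to exhibit, for some $m$, both an \rpart{|X|} and a \modpart{|X|} of $\{0,1\}^m$ by isometric copies of $X$. The former is immediate by averaging over all translates (\Cref{obs:rpart}); the real content is \Cref{lem:modpart}, proved by an induction on $k$ that slices $X$ into $X_-,X_+$, builds a weight function controlling one layer of $\{0,1\}^{m+1}$, and then combines such weight functions over pairs of adjacent layers of an auxiliary $(d{+}1)$-cube, using directed paths in its distance-$2$ graph to make every multiplicity congruent to $1$ modulo $r=2^d$.
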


A major tool in our proof of \Cref{thm:main-isometric} is a theorem of Gruslys,
Leader and Tomon~\cite{Gruslys2016II}. Roughly speaking, their theorem says
that, if we are trying to partition an arbitrarily large power $A^n$ of some set
$A$ into copies of some given set (which is exactly what we are doing here),
then it is enough to construct two specific covers of a large power of $A$,
called an `\rpart{r}' and a `\modpart{r}'.  We will now define these covers and
we will see that it is easier to construct them than to directly construct a
partition of $\{0,1\}^n$. We will state the aforementioned theorem of Gruslys,
Leader and Tomon (\Cref{thm:blackbox}) after we have given the necessary
definitions.

Let $\F$ be a family of subsets of a set $S$. A \emph{weight function on $\F$}
is an assignment of non-negative integer weights to the members of $\F$. For a
weight function $w : \F \to \No$ and an element $x \in S$, the
\emph{multiplicity of $x$ for $w$} is defined to be the sum of weights
assigned to the members of $\F$ that contain $x$.

Let $\F$ and $S$ be as above and let $r$ be a positive integer. We say that
\emph{$\F$ contains an \rpart{r} of $S$} if there exists a weight function on
$\F$ for which every element of $S$ has multiplicity $r$. Moreover, we say
that \emph{$\F$ contains a \modpart{r} of $S$} if there exists a weight function
on $\F$ for which the multiplicity of every element of $S$ is congruent to $1
\pmod{r}$; it is not required that all elements of $S$ have the same
multiplicity as long as they all satisfy the required congruence.

Trivially, $\F$ contains a \rpart{1} of $S$ if and only if $S$ can be
partitioned into members of $\F$. Furthermore, if $\F$ contains a \rpart{1} of
$S$, then $\F$ also contains an \rpart{r} and a \modpart{r} of $S$, for any
positive integer $r$. Therefore, the property of containing an \rpart{r} and a
\modpart{r} for some $r$ is weaker that that of containing a genuine partition.
However, we will be able to apply the aforementioned theorem of Gruslys, Leader
and Tomon to obtain the stronger property from the weaker one.  The statement of
this theorem requires one more technical definition, and we postpone it until
the end of the section. Instead, we will now discuss how to construct an
\rpart{r} and a \modpart{r} of $\{0,1\}^n$ into isometric copies of some $X
\subset \{0,1\}^k$ whose size is a power of $2$. First, we have to decide what
value of $r$ to use.  It turns out that the right choice is $r = |X|$.

\begin{obs} \label{obs:rpart}
    Let $X$ be a non-empty subset of $\{0,1\}^k$ for some positive integer $k$.
    Then, for any $n \ge k$, the family of isometric copies of $X$ in
    $\{0,1\}^n$ contains a \rpart{|X|} of $\{0,1\}^n$.
\end{obs}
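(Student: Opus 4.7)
\medskip

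The plan is to produce the desired weight function using only the group structure of $\{0,1\}^n$ under coordinate-wise addition mod $2$. Begin by fixing an isometric embedding of $X$ into $\{0,1\}^n$ obtained by padding: set
\[
    X' = \{(x_1, \dots, x_k, 0, \dots, 0) : (x_1, \dots, x_k) \in X\} \subset \{0,1\}^n.
\]
Since $n \ge k$, this is an isometric copy of $X$. Moreover, for every $v \in \{0,1\}^n$, the translate $v \oplus X' := \{v \oplus y : y \in X'\}$ is again an isometric copy of $X$, because translation is an isometry of $\{0,1\}^n$.

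Next, I would define the weight function $w$ on the family $\F$ of all isometric copies of $X$ in $\{0,1\}^n$ by
\[
    w(Y) = |\{v \in \{0,1\}^n : v \oplus X' = Y\}|,
\]
so $w(Y) = 0$ unless $Y$ is a translate of $X'$. The key computation is then a one-line double count: for any point $y \in \{0,1\}^n$, the multiplicity of $y$ for $w$ equals
\[
    \sum_{Y \ni y} w(Y) = |\{v \in \{0,1\}^n : y \in v \oplus X'\}| = |\{v : v \in y \oplus X'\}| = |X'| = |X|,
\]
where the middle equality uses that $y \in v \oplus X'$ iff $v = y \oplus x$ for some $x \in X'$. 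This gives multiplicity exactly $|X|$ at every point, as required.

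There is essentially no obstacle here; the only conceptual point is to avoid the temptation to weight each translate by $1$ (which would distribute weight unevenly across $\F$ since different $v$ can produce the same set), and instead weight each distinct translate by the number of $v$'s that realise it — equivalently, by the size of its translation stabiliser. Once the weight function is set up this way, the proof is immediate from the fact that $(v, x) \mapsto v \oplus x$ is a bijection between $\{0,1\}^n \times X'$ and $\{0,1\}^n \times X'$, so every $y$ has precisely $|X|$ preimages.
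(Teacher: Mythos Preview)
Your proof is correct and is essentially the same as the paper's: both fix one isometric copy of $X$ in $\{0,1\}^n$, take all its translates under coordinatewise addition mod $2$, and double-count to see that every point lies in exactly $|X|$ of them. Your treatment is slightly more explicit about how repeated translates are weighted, but this is a cosmetic difference only (and your caution is in fact unnecessary: even weighting each \emph{distinct} translate by $1$ gives uniform multiplicity $|X|/|S|$ where $S$ is the translation stabiliser of $X'$, which can then be scaled up).
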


\begin{proof}
    Let $n \ge k$ be given. We fix one isometric copy of $X$ in $\{0,1\}^n$,
    which we denote by $Y$.  Under addition modulo $2$, for any $p \in
    \{0,1\}^n$, the set $Y+p = \{y+p : y \in Y\}$ is a subset of $\{0,1\}^n$.
    Moreover, it is an isometric copy of $X$.

    By symmetry, all elements of $\{0,1\}^n$ are contained in $Y+p$ for the same
    number of choices of $p$. By double counting, this number must equal
    $2^n|Y|/2^n = |X|$. Therefore, the sets $Y+p$, where $p \in \{0,1\}^n$, form
    a \rpart{|X|} of $\{0,1\}^n$.
\end{proof}

Constructing a \modpart{|X|} is rather more difficult, but also possible.

\begin{lem} \label{lem:modpart}
    Let $X$ be a non-empty subset of $\{0,1\}^k$ for some positive integer $k$,
    and let $r$ be a power of $2$. Then there exists an integer $n \ge k$ such
    that the family of all isometric copies of $X$ in $\{0,1\}^n$ contains a
    \modpart{r} of $\{0,1\}^n$.
\end{lem}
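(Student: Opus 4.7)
The plan is to split on the parity of $|X|$, handling the odd case directly and the even case by induction on $s$, where $r = 2^s$.

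If $|X|$ is odd then $\gcd(|X|, r) = 1$, since $r$ is a power of $2$. By Observation~\ref{obs:rpart} applied with $n = k$, the family of isometric copies of $X$ in $\{0,1\}^k$ already contains an $|X|$-partition; multiplying every weight in this partition by a positive integer representative of $|X|^{-1} \bmod r$ yields multiplicities congruent to $1$ modulo $r$, and hence a $(1 \bmod r)$-partition. Thus one may assume $|X|$ is even from now on.

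For the base case of the induction, $s = 1$ (that is, $r = 2$), the idea is group-algebraic. The group algebra $R = \mathbb{F}_2[\{0,1\}^n]$ (under vector addition) is a local Artinian ring whose unique minimal nonzero ideal is one-dimensional over $\mathbb{F}_2$, spanned by the element $\sum_{v \in \{0,1\}^n} e_v$ corresponding to the constant $1$ function on $\{0,1\}^n$. Since $X$ is nonempty, the principal ideal $(1_X) \subseteq R$ is nonzero, so it must contain this socle. Translating back, some $\mathbb{F}_2$-linear combination of translates of $X$ has multiplicity $\equiv 1 \pmod 2$ at every vertex, providing the desired $(1 \bmod 2)$-partition (with $n = k$ already sufficient).

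For the inductive step from $r$ to $2r$, given a $(1 \bmod r)$-partition in $\{0,1\}^{n_0}$ whose multiplicities are of the form $1 + r\alpha(v)$, I would enlarge the hypercube to $\{0,1\}^{n_0 + n_1}$ for $n_1$ large, apply the inductive partition inside each $\{0,1\}^{n_0}$-slab $\{0,1\}^{n_0} \times \{p\}$, and then add a correction weighting whose multiplicity at each vertex is $\equiv r\alpha(v) \pmod{2r}$. Dividing by $r$ and reducing modulo $2$, the task of building the correction becomes an $\mathbb{F}_2$-level realizability problem: express the parity function $\alpha \bmod 2$ as a multiplicity function of weighted isometric copies of $X$ in the enlarged hypercube.

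I anticipate the main obstacle will be exactly this realizability step, since the target is more delicate than the constant function $1$ of the base case. The plan for overcoming it is to exploit two sources of flexibility: first, the ideal $\sum_\sigma (1_{\sigma(X)})$ inside $\mathbb{F}_2[\{0,1\}^{n_0+n_1}]$ becomes richer as $n_1$ grows, particularly through isometric copies that cross between slabs; and second, adding any integer multiple of the Observation~\ref{obs:rpart} $|X|$-partition does not change multiplicities modulo $r$, providing further maneuvering room. Combining these ingredients should allow the correction to be built and the induction to close, yielding the $(1 \bmod 2r)$-partition for some $n \ge k$.
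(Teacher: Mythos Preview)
Your approach differs substantially from the paper's: the paper inducts on $k$, using the slice $X_-\subset\{0,1\}^{k-1}$ to obtain a \modpart{r} one dimension down, lifting it to a weight function on copies of $X$ in $\{0,1\}^{m+1}$ that controls only the bottom half $\{0,1\}^m\times\{0\}$, and then passing to $\{0,1\}^{m+d+1}$ (where $r=2^d$) and using a directed-path construction on the distance-$2$ graph of $\{0,1\}^{d+1}$ to propagate control to every slab. Your odd-$|X|$ reduction via $|X|^{-1}\bmod r$ and your $r=2$ base case via the socle of the local ring $\mathbb{F}_2[(\mathbb{Z}/2)^k]$ are both correct and clean; neither idea appears in the paper.

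The gap is precisely where you flag it: to pass from $r$ to $2r$ you must realise $(v,p)\mapsto\alpha(v)\bmod 2$ as a mod-$2$ multiplicity function of isometric copies of $X$ in $\{0,1\}^{n_0+n_1}$, and you have not done so. Your two proposed tools do not suffice as stated---the $|X|$-partition contributes only constant multiplicity, and ``the ideal becomes richer as $n_1$ grows'' is a hope rather than an argument. The gap does close, however, with one further observation you have essentially already proved. Choose $n_1\ge k$; then each fibre $\{v\}\times\{0,1\}^{n_1}$ is an isometric copy of a hypercube of dimension at least $k$, and your own base-case socle argument, run inside that fibre, realises its indicator function modulo $2$ using isometric copies of $X$ contained in it. Summing these over the $v$ with $\alpha(v)$ odd yields multiplicity $\equiv\alpha(v)\pmod 2$ at every point of $\{0,1\}^{n_0+n_1}$; multiplying that weight function by $r$ then gives the required correction modulo $2r$, and the induction closes. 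With this addition your argument is complete and bypasses the paper's directed-edge bookkeeping, at the cost of invoking the structure of $\mathbb{F}_2[(\mathbb{Z}/2)^n]$.
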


Although we are only going to use this lemma with $r = |X|$, we state it with
$r$ being any power of $2$. This small detail will allow us to prove this lemma
by induction, which we do in \Cref{sec:proof}.

We now turn to giving the final definitions needed for the statement of
\Cref{thm:blackbox}. Let $S$ be a set. Here and in the rest of the paper, for
any non-negative integers $m,n$, we identify $S^m \times S^n$ with $S^{m+n}$.
Therefore, for any $x \in S^m, y \in S^n$, we treat $(x,y)$ as an element of
$S^{m+n}$. Furthermore, for any set $X \subset S^n$ and any permutation $\pi :
\{1,\dotsc,n\} \to \{1,\dotsc,n\}$, we define $\pi(X)$ to be the image of $X$
after permuting the coordinates according to $\pi$. In other words, $\pi(X) =
\{(x_{\pi(1)}, \dotsc, x_{\pi(n)}) : (x_1, \dotsc, x_n) \in X\}$. Finally, for
any sets $Y \subset S^m, Z \subset S^n$ with $m \le n$, we say that $Z$ is a
\emph{copy} of $Y$ if $Z = \pi(Y \times \{z\})$ for some $z \in S^{n-m}$ and
some permutation $\pi : \{1,\dotsc,n\} \to \{1,\dotsc,n\}$.

\begin{thm}[Gruslys, Leader and Tomon~\cite{Gruslys2016II}] \label{thm:blackbox}
    Let $\F$ be a family of subsets of a finite set $S$. If, for some positive
    integer $r$, $\F$ contains an \rpart{r} and a \modpart{r} of $S$, then there
    exists a positive integer $n$ such that $S^n$ can be partitioned into copies
    of members of $\F$.
\end{thm}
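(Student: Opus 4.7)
The plan is to construct, for some $n$, a non-negative integer weighting on copies of members of $\F$ in $S^n$ whose multiplicity function is constantly $1$. Denote by $w_1$ the given $r$-partition and by $w_2$ the given $(1 \bmod r)$-partition of $S$, and write $\mu_0 : S \to \No$ for the multiplicity function of $w_2$, so that $\mu_0 \equiv \mathbf{1}_S \pmod{r}$.

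I would begin with a reformulation: any weighting on copies in $S^n$ is equivalently specified by choosing, for each \emph{line} $(i, c)$ -- i.e., a coordinate direction $i \in \{1,\dotsc,n\}$ together with a vector $c \in S^{n-1}$ of values for the remaining coordinates -- a non-negative integer weighting on $\F$ applied to that line. Writing $\nu_{i,c} : S \to \No$ for the induced multiplicity function on the line $(i,c)$, the multiplicity at $x \in S^n$ is
\[
\mathrm{mult}(x) \;=\; \sum_{i=1}^{n} \nu_{i,\,x_{-i}}(x_i),
\]
and we have complete freedom to pick each $\nu_{i,c}$ from the cone of multiplicity functions on $S$ realisable by some non-negative integer weighting of $\F$. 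The hypothesis supplies two useful primitives for these local choices: the constant function $r \cdot \mathbf{1}_S$ (from $w_1$) and $\mu_0$ (from $w_2$).

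To make $\mathrm{mult}$ identically $1$, I would proceed in two stages. In the first stage, inductively construct, for each $k \ge 1$, an integer $n_k$ and a weighting on copies in $S^{n_k}$ whose multiplicity function is $\equiv 1 \pmod{r^k}$. The base case $k = 1$ is immediate: setting $\nu_{1, c} = \mu_0$ and $\nu_{i, c} = 0$ for $i \ne 1$ gives multiplicity $\mu_0(x_1) \equiv 1 \pmod{r}$. For the induction $k \to k+1$, I would combine a mod-$r^k$ weighting on $S^{n_k}$ with $w_1$-based weightings in additional coordinates; since each $w_1$-lift contributes the constant $r$ to the multiplicity, careful scalar combinations over lines can add the desired residues modulo $r^{k+1}$ at each point, raising the modular precision by a factor of $r$. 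In the second stage, once the multiplicity is $\equiv 1 \pmod{r^k}$ \emph{and} bounded above by $r^k$, it must equal $1$ pointwise, giving the desired partition.

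The main obstacle will be the tension between these two stages: naive inductive constructions in Stage 1 tend to inflate the maximum multiplicity at roughly the same rate as the modulus grows, so Stage 2 never becomes applicable. Overcoming this requires letting $n_k$ grow quickly enough that the multiplicity increments needed at each induction step can be spread thinly across many coordinate directions, so that each individual $\nu_{i,c}$ remains small while their collective sum carries the correct residue information. Designing this spreading so that the congruence conditions propagate cleanly, and so that each resulting $\nu_{i,c}$ is genuinely an achievable weighting on $\F$, is the heart of the proof.
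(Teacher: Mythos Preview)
The paper does not contain a proof of this statement. It is explicitly flagged as ``the only result that we use without proof'' and is quoted from Gruslys, Leader and Tomon~\cite{Gruslys2016II}; there is therefore nothing in the present paper to compare your proposal against.

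That said, your sketch is in the right spirit for how the cited theorem is actually proved: the reduction to weightings on axis-parallel lines is correct (since every copy of a member of $\F$ in $S^n$ lies on a single such line), and the two-stage plan of first achieving multiplicity $\equiv 1 \pmod{r^k}$ and then arranging that the multiplicity is also at most $r^k$ is the right shape. You have also correctly located the genuine difficulty: the naive induction that bumps the modulus from $r^k$ to $r^{k+1}$ by adding multiples of the $r$-partition will inflate the maximum multiplicity just as fast, so one must organise the construction so that the error terms, not just the congruences, are controlled. Your proposal acknowledges this but does not resolve it; the phrase ``careful scalar combinations over lines can add the desired residues'' is where the actual argument lives, and making it precise requires a non-trivial recursive scheme (in the original paper, an averaging or ``spreading'' construction across blocks of coordinates). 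As written, your proposal is a correct outline of the strategy but not yet a proof.
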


\Cref{thm:blackbox} is the only result that we use without proof. We will now
explain how \Cref{obs:rpart,lem:modpart,thm:blackbox} imply
\Cref{thm:main-isometric}.

\begin{proof}[Proof of \Cref{thm:main-isometric}.]
    Let $X$ be a subset of $\{0,1\}^k$ such that $|X|$ is a power of $2$. It
    follows from \Cref{lem:modpart} that there exists a positive integer $m \ge
    k$ such that the family of isometric copies of $X$ in $\{0,1\}^m$ contains a
    \modpart{|X|} of $\{0,1\}^m$. By \Cref{obs:rpart}, the family of isometric
    copies of $X$ in $\{0,1\}^m$ also contains a \rpart{|X|} of $\{0,1\}^m$.
    Therefore, it follows from \Cref{thm:blackbox} with $S = \{0,1\}^m$ that
    there exists a positive integer $n$ such that $\{0,1\}^{mn}$ can be
    partitioned into copies of sets which are isometric copies of $X$ in
    $\{0,1\}^m$. However, a copy of an isometric copy of $X$ is itself an
    isometric copy of $X$, so we are done.
\end{proof}

\section{Constructing a \modpart{r} of $\{0,1\}^n$} \label{sec:proof}

Here we prove \Cref{lem:modpart}. This section is is the heart of the paper: it
is the key new ingredient beyond the ideas of \cite{Gruslys2016I} and
\cite{Gruslys2016II}.  First, we introduce some convenient notation. For any set
$A \subset \{0,1\}^n$, we define
\begin{align*}
    A_+ &= \left\{ a \in \{0,1\}^{n-1} : (a, 1) \in A \right\}, \\
    A_- &= \left\{ a \in \{0,1\}^{n-1} : (a, 0) \in A \right\}.
\end{align*}

\begin{proof}[Proof of \Cref{lem:modpart}.]

    Fix $r = 2^d$. We will use induction on $k$. If $k = 1$, then $X$ is either
    a single point or the whole $\{0,1\}$, and so the conclusion holds with $n =
    1$.
    
    We now suppose that $k \ge 2$. At least one of the sets $X_+$ and $X_-$ is
    not empty, so we may assume without loss of generality that $X_- \neq \es$.
    Since $X_-$ is a subset of $\{0,1\}^{k-1}$, the induction hypothesis implies
    the existence of a positive integer $m$ such that the family of isometric
    copies of $X_-$ in $\{0,1\}^m$ contains a \modpart{r} of $\{0,1\}^m$.
    Moreover, we note that, for every set $A \subset \{0,1\}^m$ which is an
    isometric copy of $X_-$, there exists a set $B \subset \{0,1\}^{m+1}$ which
    is an isometric copy of $X$ and which satisfies $B_- = A$. Therefore, it is
    possible to define a weight function on the family of isometric copies of
    $X$ in $\{0,1\}^{m+1}$ in such a way that the multiplicity of every element
    of $\{0,1\}^m \times \{0\}$ is congruent to $1 \pmod{r}$. We do not impose
    any conditions on the multiplicities of elements of $\{0,1\}^m \times
    \{1\}$. For convenience, we denote that the multiplicity of any $x \in
    \{0,1\}^m \times \{1\}$ is congruent to $f(x) \pmod{r}$.

    We will prove that the conclusion of \Cref{lem:modpart} holds with $n = m +
    d + 1$. Let $x, y \in \{0,1\}^{d+1}$ be two elements that differ in exactly
    two entries. There exists an element $z \in \{0,1\}^{d+1}$ that differs from
    both $x$ and $y$ in exactly one entry. Then $\{0,1\}^m \times \{x,z\}$ is an
    isometric copy of $\{0,1\}^{m+1}$, while $\{0,1\}^m \times \{x\}$ and
    $\{0,1\}^m \times \{z\}$ are isometric copies of $\{0,1\}^m$. Therefore,
    there exists an isometry $\phi : \{0,1\}^m \times \{x,z\} \to \{0,1\}^{m+1}$
    which maps $\{0,1\}^m \times \{x\}$ to $\{0,1\}^m \times \{0\}$ and
    $\{0,1\}^m \times \{z\}$ to $\{0,1\}^m \times \{1\}$. Hence, it is possible
    to assign integer weights to the isometric copies of $X$ in $\{0,1\}^m
    \times \{x,z\}$ so that the multiplicity of every element of $\{0,1\}^m
    \times \{x\}$ is congruent to $1 \pmod{r}$, and the multipliticity of any $p
    \in \{0,1\}^m \times \{z\}$ is congruent to $f(\phi(p)) \pmod{r}$. We denote
    the resulting weight function by $w'$.

    The restriction of $\phi$ to $\{0,1\}^m \times \{z\}$ maps this set
    isometrically onto $\{0,1\}^m \times \{1\}$. This map extends to an isometry
    $\{0,1\}^m \times \{y, z\} \to \{0,1\}^{m+1}$. Therefore, we can assign
    integer weights to the isometric copies of $X$ in $\{0,1\}^m \times \{y,z\}$
    in such a way that every element of $\{0,1\}^m \times \{y\}$ has
    multiplicity congruent to $1 \pmod{r}$, and any $p \in \{0,1\}^k \times
    \{z\}$ has multiplicity congruent to $f(\phi(p)) \pmod{r}$. We denote the
    resulting weight function by $w''$.

    Although, technically, the weight functions $w', w''$ are only defined on
    isometric copies of $X$ in, respectively, $\{0,1\}^m \times \{x,z\}$ and
    $\{0,1\}^m \times \{y,z\}$, we may suppose that they are defined and equal
    to $0$ on the other isometric copies of $X$ in $\{0,1\}^n$. Then
    $w'+(r-1)w''$, which we denote by $w_{x,y}$, is a weight function on the
    family of all isometric copies of $X$ in $\{0,1\}^n$. Moreover, for any $p
    \in \{0,1\}^n$, the multiplicity of $p$ for $w_{x,y}$ is congruent to
    $$
        \begin{cases}
            1 \pmod{r} & \text{if } p \in \{0,1\}^m \times \{x\}, \\
            -1 \pmod{r} & \text{if } p \in \{0,1\}^m \times \{y\}, \\
            0 \pmod{r} & \text{otherwise}.
        \end{cases}
    $$

    The existence of the weight functions $w_{x,y}$ simplifies our problem in
    the following way. Let us view $\{0,1\}^n$ as the product set $\{0,1\}^m
    \times \{0,1\}^{d+1}$. Given two elements $x, y \in \{0,1\}^{d+1}$ with
    $d(x,y) = 2$, we identify the pair $(x,y)$ with both the directed edge
    $\overrightarrow{xy}$ on $\{0,1\}^{d+1}$ and the weight function $w_{x,y}$.
    Now, our aim is to find a family (allowing repetitions) of directed edges on
    $\{0,1\}^{d+1}$, whose every member joins two elements of $\{0,1\}^{d+1}$
    that are distance $2$ apart, and such that for any $v \in \{0,1\}^{d+1}$ the
    difference between the in-degree and out-degree of $v$ is congruent to $1
    \pmod{p}$. Indeed, such a family of directed edges corresponds to a weight
    function for which every element of $\{0,1\}^n$ has multiplicity congruent
    to $1 \pmod{r}$.

    We will now construct a family of directed edges with the desired
    properties. Fix vertices $x^\ast = (0,\dotsc,0) \in \{0,1\}^{d+1}$ and
    $y^\ast = (1,0,\dotsc,0) \in \{0,1\}^{d+1}$. Note that, for any vertex $v
    \in \{0,1\}^{d+1}$, there exists a directed path starting from $x^\ast$ or
    $y^\ast$ and ending at $v$ with the property that any two consecutive
    vertices on this path differ in exactly two entries. Such a path increases
    the difference between the in-degree and the out-degree of $v$ by $1$,
    decreases this parameter of its starting point ($x^\ast$ or $y^\ast$) by $1$
    and does not change the value of this parameter for any other vertex. Now,
    for any vertex $v \in \{0,1\}^{d+1} \setminus \{x^\ast\}$ with an even
    number of $1$'s, select one such path from $x^\ast$ to $v$. Similarly, for
    any $v \in \{0,1\}^{d+1} \setminus \{y^\ast\}$ with an odd number of $1$'s,
    select one such path from $y^\ast$ to $v$. Let us combine all of these paths
    together to obtain a family of directed edges. It is clear that for any $v
    \in \{0,1\}^{d+1} \setminus \{x^\ast, y^\ast\}$ the difference between the
    in-degree and the out-degree of $v$ is equal to $1$. Moreover, excluding
    $x^\ast$, there are $2^d - 1$ vertices in $\{0,1\}^{d+1}$ with an even
    number of $1$'s. Therefore, the difference between the in-degree and the
    out-degree of $x^\ast$ is $-(2^d - 1) \equiv 1 \pmod{r}$. Similarly, the
    difference between the in-degree and the out-degree of $y^\ast$ is also
    congruent to $1 \pmod{r}$. This finishes the proof.  \end{proof}

\section{Concluding remarks and open problems}

The statement of \Cref{thm:main-isometric} is very similar to that of
Chalcraft's conjecture. Indeed, the only difference is that, instead of an
infinite space $\Z^n$, here we are dealing with a finite hypercube $\{0,1\}^n$.
However, the results are, in fact, significantly different.

To illustrate this claim, we note that not every sensible finite version of
Chalcraft's conjecture is true.  First, there is the issue of choosing which
metric to use. In $\Z^n$ or in any hypercube $[\ell]^n$ there are at least two
natural choices of a metric: the Euclidean metric $\allowbreak d( (x_1, \dotsc,
x_n), (y_1, \dotsc, y_n) ) \allowbreak = \sqrt{\sum_{i=1}^n (x_i - y_i)^2}$ and
the graph metric $\sum_{i=1}^n |x_i - y_i|$. Chalcraft's conjecture (for $\Z^n$)
is true for both metrics.  \Cref{thm:main-isometric} (for $[2]^n$) is
independent of the choice of the metric, since if $X,Y \subset \{0,1\}^n$ are
isometric copies with respect to one of the metrics then they are also isometric
copies with respect to the other. However, the situation is different in
$[\ell]^n$ for $\ell \ge 3$: the obvious version of Chalcraft's conjecture is
false for $[\ell]^n$ with the Euclidean metric. For example, take $\ell = 5$ and
let $T \subset [5]^2$ be a plus-shaped set of size $5$, as shown in
\Cref{fig:plus}. Then, no matter what $n$ we choose, it is impossible to
partition $[5]^n$ into isometric copies of $T$ because the corners of $[5]^n$
cannot be covered. Similar counterexamples exist for all $\ell \ge 3$.

\begin{figure}[h]
    \begin{centering}
    \begin{tikzpicture}
        \fill (0,0) circle (2pt);
        \fill (-.5,0) circle (2pt);
        \fill (.5,0) circle (2pt);
        \fill (0,-.5) circle (2pt);
        \fill (0,.5) circle (2pt);
    \end{tikzpicture}
    \caption{The plus-shaped set $T$.}
    \label{fig:plus}
    \end{centering}
\end{figure}
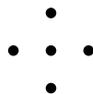

Second, the situation does not become trivial even if we choose the graph
metric.  It turns out that, with this metric, the obvious version of Chalcraft's
conjecture is true for $[\ell]^n$ where $\ell \ge 2$ is even. This fact can be
verified in a similar way to \Cref{thm:main-isometric}; essentially, the only
difference is that we have to partition $[\ell]^n$ into copies of $[2]^n$ before
we can apply \Cref{obs:rpart} (it is also important to note that $[\ell]^n$ can
be isometrically embedded into $[2]^m$ for sufficiently large $m$). However, the
corresponding conjecture would be false for $[\ell]^n$ where $\ell \ge 3$ is
odd. Indeed, we will demostrate that even the corresponding version of the
weaker \Cref{thm:main} is false.

We define $P_\ell^n$ to be the graph with vertex set $[\ell]^n$ where two
vertices $(x_1, \dotsc, x_n), \allowbreak (y_1, \dotsc, y_n)$ are adjacent if
$\sum_{i=1}^n |x_i-y_i| = 1$. We say that a vertex is odd if the sum of its
entries is odd; otherwise, that vertex is even.

\begin{prop} \label{prop:counterexample}

    Let $\ell \ge 3$ be an odd integer. Then there exists a graph $H$ satisfying
    \begin{itemize}
        \item $H$ is isomorphic to an induced subgraph of $P_\ell^m$ for some
            $m$
        \item $|H|$ is a power of $\ell$
        \item for any $n$, it is impossible to partition the vertices of
            $P_\ell^n$ into induced copies of $H$.
    \end{itemize}
    
\end{prop}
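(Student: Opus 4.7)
The plan is to exhibit $H$ as a suitably chosen star and derive the obstruction from a parity imbalance in $P_\ell^n$. The graph $P_\ell^n$ is bipartite with parts
\[
E = \{x \in [\ell]^n : x_1 + \dots + x_n \text{ is even}\}, \qquad O = [\ell]^n \setminus E.
\]
Since $\ell$ is odd, $\sum_{t=1}^{\ell}(-1)^t = -1$, and hence $\sum_{x \in [\ell]^n}(-1)^{x_1+\cdots+x_n} = (-1)^n$, which shows that $|E|$ and $|O|$ differ by exactly $1$, regardless of $n$. This is the key input: every parity-based tiling obstruction I use will come from this single fact.

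Now let $H$ be connected and bipartite with part sizes $a < b$, and suppose $V(P_\ell^n)$ is partitioned into induced copies of $H$. Since every induced copy inherits its bipartition from the ambient colouring of $P_\ell^n$, each copy uses either $a$ vertices of $E$ and $b$ of $O$, or vice versa; letting $p$ and $q$ count the two cases gives $|E| - |O| = (p-q)(b-a)$. Combined with $||E|-|O||=1$, this forces $b - a = 1$. So it suffices to produce an induced subgraph $H$ of some $P_\ell^m$, whose order is a power of $\ell$ and whose bipartition sizes differ by at least $2$.

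I take $H = K_{1,\ell^k-1}$, where $k$ is the smallest integer with $\ell^k \ge 4$; so $k = 2$ for $\ell = 3$ and $k = 1$ for $\ell \ge 5$. Then $|H| = \ell^k$ is a power of $\ell$, and the bipartition sizes $1$ and $\ell^k-1$ differ by $\ell^k - 2 \ge 2$. To embed $H$ as an induced subgraph of $P_\ell^m$ with $m = \lceil(\ell^k-1)/2\rceil$, place the star's centre at the interior vertex $(2,2,\dots,2)$, which has degree $2m \ge \ell^k - 1$, and select any $\ell^k-1$ of its neighbours; any two such neighbours differ from the centre by distinct vectors of the form $\pm e_i$, so they are at $\ell_1$-distance $2$ from each other and hence non-adjacent in $P_\ell^m$, confirming that the induced subgraph is indeed $K_{1,\ell^k-1}$.

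There is no real obstacle: the only idea needed is to notice the parity imbalance of $[\ell]^n$ for odd $\ell$, and to pick $H$ whose bipartition is unbalanced enough (difference at least $2$) that it cannot accommodate this imbalance. Combining the two ingredients, a partition of $V(P_\ell^n)$ into induced copies of $H$ would yield $(p-q)(\ell^k - 2) = \pm 1$, forcing $\ell^k - 2 \mid 1$ and contradicting $\ell^k \ge 4$.
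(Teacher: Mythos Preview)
Your proof is correct and rests on the same parity imbalance as the paper's: for odd $\ell$, the two colour classes of $P_\ell^n$ differ in size by exactly $1$. From that point on the arguments diverge. The paper chooses $H$ to be a connected induced subgraph of some $P_\ell^m$ whose two part sizes are $\ell$ and $\ell^2-\ell$; since both are divisible by $\ell$, any partition into induced copies of $H$ would force the number of even vertices of $P_\ell^n$ to be a multiple of $\ell$, contradicting $2A_n=\ell^n\pm1$. You instead take $H=K_{1,\ell^k-1}$ (an explicit star), and obstruct via the linear relation $|E|-|O|=(q-p)(b-a)$, which cannot equal $\pm1$ once $b-a\ge 2$. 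Your route has the advantage of giving a completely concrete $H$ together with an explicit induced embedding, whereas the paper only asserts the existence of a suitable connected subgraph for large $m$; the paper's route has the mild advantage that its obstruction is a congruence mod $\ell$ rather than a divisibility of $1$, which ties in more visibly with the ``power of $\ell$'' theme. Either way the core idea is identical and the differences are a matter of taste.
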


\begin{proof}
    Fix an odd integer $\ell \ge 3$ and write $A_n$ and $B_n$ for the number of
    even and odd vertices in $P_\ell^n$, respectively.  For any $n$ the graph
    $P_\ell^n$ contains a Hamiltonian path, which visits vertices of alternating
    parity, so we have $|A_n - B_n| \le 1$.  However, $A_n + B_n = |P_\ell^n| =
    \ell^n$ is odd, so in fact $|A_n - B_n| = 1$. In particular, $A_n \not\equiv
    0 \pmod{\ell}$.

    Now, choose $m$ sufficiently large so that $P_\ell^m$ contains an induced
    connected subgraph on $\ell$ even and $\ell^2 - \ell$ odd vertices. Denote
    this subgraph by $H$. We claim that, for any $n$, it is impossible to
    partition the vertices of $P_\ell^n$ into induced copies of $H$. Indeed,
    each induced copy of $H$ in $P_\ell^n$ contains $\ell$ or $\ell^2-\ell$ even
    vertices.  Therefore, the total number of even vertices covered by such a
    partition would be divisible by $\ell$. However, as we saw previously, the
    number of even vertices in $P_\ell^n$ is not.
\end{proof}

It would be interesting to know if \Cref{thm:main} is particular to the
hypercubes $Q_n$ or if it holds for powers of other graphs as well. More
specifically, let $G,H$ be finite graphs. For any $n$, we define $G^n$ to be the
graph with vertex set $V(G)^n$, where $(u_1, \dotsc, u_n)$ and $(v_1, \dotsc,
v_n)$ are adjacent if and only if there exists an index $i' \in [n]$ such that
$u_i = v_i$ for all $i \neq i'$ and $u_{i'}, v_{i'}$ are adjacent vertices of
$G$. We remark that, with this definition, $Q_n$ is the $n$th power of the path
$P_2$ consisting of a single edge.  What are the natural conditions on $H$ that
would make it reasonable to believe that, for some $n$, $G^n$ can be partitioned
into $H$-sets?  Obviously, $|H|$ has to divide $|G|^n$, so we should assume that
every prime factor of $|G|$ also divides $|H|$. We should also require $H$ to be
isomorphic to an induced sugraph of $G^k$ for some $k$; in fact, we may assume
that $H$ is isomorphic to an induced sugraph of $G$ itself.  However, this is
not enough. First, it may still not be possible to cover $G^n$ with copies of
$H$. Moreover, \Cref{prop:counterexample} tells us that even the extra
assumption that $G$ can be covered by copies of $H$ would not be enough. After
examining why $G = Q_n$ works and $G = P_3^n$ does not, we see that
\Cref{obs:rpart} breaks down because $P_3^n$ is not vertex-transitive. We
conjecture that \Cref{thm:main} holds whenever we replace $Q_n$ by another
vertex-transitive graph.

\begin{conj} \label{conj:any-graph}
    Let $G$ be a finite vertex-transitive graph and let $H$ be an induced
    subgraph of $G$. If every prime factor of $|H|$ divides $|G|$, then there
    exists a positive integer $n$ such that $G^n$ can be partitioned into
    induced copies of $H$.
\end{conj}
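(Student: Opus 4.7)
The plan is to follow the three-step template used for \Cref{thm:main-isometric}: for some common $r$ and some $n$, construct both an \rpart{r} and a \modpart{r} of $V(G^n)$ by induced copies of $H$, and then invoke \Cref{thm:blackbox}. Vertex-transitivity is used only in the first of these steps; the second is where the real work has to be done.

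For the \rpart{r}, let $\Gamma\le\mathrm{Aut}(G)$ be a transitive subgroup, acting coordinatewise on $V(G^n)=V(G)^n$ via $\Gamma^n$. Fix any induced copy $Y$ of $H$ in $G^n$. By the same double-counting argument as in \Cref{obs:rpart}, the multiset $\{\sigma(Y):\sigma\in\Gamma^n\}$ covers every vertex of $G^n$ the same number of times, giving an \rpart{r} for $r=|\Gamma^n|\cdot|H|/|G|^n$. When $G$ is a Cayley graph and $\Gamma$ is taken to act regularly one gets the clean value $r=|H|$; for a non-Cayley vertex-transitive $G$ the value of $r$ picks up an extra factor coming from the point stabiliser, and arranging the arithmetic compatibility with the next step is already a subtle issue.

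For the \modpart{r}, the natural approach is to generalise the induction in the proof of \Cref{lem:modpart}. That argument split $X\subset\{0,1\}^k$ into its two coordinate slices, applied the induction to $X_-$ in $\{0,1\}^m$, and then used pairs $x,y\in\{0,1\}^{d+1}$ at Hamming distance $2$ with a common neighbour $z$ to build weight functions $w_{x,y}$, which were combined by an Eulerian-type construction on $\{0,1\}^{d+1}$ to produce the required $1\pmod r$ congruences. The first two ingredients should have analogues for vertex-transitive $G$: slice $H$ along one coordinate of $G^k$ and induct, and use $\Gamma$ acting on extra coordinates of $G^n$ to build the analogues of the $w_{x,y}$. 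The obstacle is the third ingredient, which now becomes an Eulerian-type problem on an auxiliary graph on some power $V(G)^{\ell}$ whose edges record the available weight-function moves.

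This last step is the main obstacle and is where I do not presently see how to proceed in generality. In the hypercube case it works because of two coincidences specific to $G=P_2$: the auxiliary graph is connected (any two vertices of the same parity in $\{0,1\}^{d+1}$ are joined by a distance-$2$ path), and its order $2^{d+1}$ is perfectly matched to $r=2^d$, making the residue $-(2^d-1)\equiv 1\pmod r$ drop out automatically. For general vertex-transitive $G$ neither feature is guaranteed, and I would expect that a genuinely new combinatorial idea is needed. A sensible first target is therefore the Cayley case for abelian groups of prime-power order, where the slicing and the group action are most compatible and one might hope to force the relevant residues to be $1\pmod r$ by choosing $n$ judiciously before attacking \Cref{conj:any-graph} in full.
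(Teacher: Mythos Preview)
This statement is \Cref{conj:any-graph}, which the paper poses as an open \emph{conjecture} in the concluding remarks; the paper does not provide a proof, so there is nothing to compare your proposal against. Your write-up is accordingly not a proof either, and you say so yourself: the \modpart{r} step is left as an acknowledged obstacle.

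That said, your diagnosis of where the difficulty lies is accurate and matches the paper's own motivation for the conjecture. Vertex-transitivity is exactly the hypothesis that rescues the averaging argument of \Cref{obs:rpart} (the paper explicitly observes that it is the failure of vertex-transitivity that makes $P_3^n$ a counterexample), and your double-counting over a transitive $\Gamma^n$ is the correct generalisation. Your remark that in the non-Cayley case the resulting value of $r$ need not equal $|H|$, and that this creates a compatibility issue with the \modpart{r} step, is a genuine subtlety. The real gap is precisely the one you flag: the inductive construction in the proof of \Cref{lem:modpart} leans on two features specific to $\{0,1\}$---the two-slice decomposition along a coordinate, and the arithmetic coincidence $-(2^d-1)\equiv 1\pmod{2^d}$---neither of which has an obvious analogue for a general vertex-transitive $G$. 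Until someone supplies a replacement for that argument, the conjecture remains open, and your proposal should be read as a plausible plan of attack rather than a proof.
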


What happens if instead of partitioning the vertices of $Q_n$ we attempt to
partition the edges? If we want to partition the edge set of $Q_n$ into copies
of a fixed graph $H$, then the obvious necessary divisibility condition is $e(H)
| 2^{n-1} n$, which is satisfied whenever $n$ is a multiple of $e(H)$.
Therefore, as long as $H$ is isomorphic to a subgraph of $Q_k$ for some $k$, we
may expect that such a partition exists for some $n$. Along with I.  Leader and
T.S. Tan we make the following conjecture.

\begin{conj} \label{conj:edges}
    Let $H$ be a non-empty subgraph of $Q_k$ for some $k$. Then there exists a
    positive integer $n$ such that the edges of $Q_n$ can be covered by
    edge-disjoint copies of $H$ (the copies of $H$ are not required to be
    induced).
\end{conj}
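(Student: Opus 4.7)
The plan is to mirror the three-step strategy that proves \Cref{thm:main-isometric}, adapted from vertex partitions to edge decompositions. Throughout one would work with weight functions on the family $\mathcal{F}_n$ of edge sets of (not necessarily induced) copies of $H$ in $Q_n$, aiming to build up an \rpart{r} and a \modpart{r} of $E(Q_n)$ for a suitable $r$, and then to lift these to a genuine edge decomposition via an edge analogue of \Cref{thm:blackbox}.

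For the \rpart{r} analogue of \Cref{obs:rpart}, fix one copy $H_0$ of $H$ in $Q_n$ and sum over its images under translations $x \mapsto x \oplus p$ with $p \in \{0,1\}^n$ and under coordinate permutations. By symmetry every edge of $Q_n$ is covered the same number of times, and double counting gives common multiplicity $|\mathrm{Aut}(Q_n)|\cdot e(H)/|E(Q_n)| = 2(n-1)!\cdot e(H)$; passing to a suitable subgroup (e.g., only translations, or only cyclic coordinate shifts) should let one refine this to $r = e(H)$ once $n$ is a multiple of $e(H)$. For the \modpart{r} analogue of \Cref{lem:modpart}, the plan is to induct on $e(H)$ (or on $k$): slice $H$ along the last coordinate of $Q_k$ into $H_-$, $H_+$, and a matching of cross edges; apply the inductive hypothesis to the smaller $H_-$ to cover the edges with both endpoints in the bottom half-cube with multiplicities congruent to $1 \pmod r$; then extend to the top half-cube and to cross edges via a directed-path construction on an auxiliary graph on $\{0,1\}^{d+1}$, analogous to the weight functions $w_{x,y}$ in the proof of \Cref{lem:modpart}. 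The parity bookkeeping must be reworked, since each edge has two endpoints rather than one, and the shift in multiplicity induced by a single weighted copy of $H$ now affects pairs of vertices simultaneously.

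The third step, lifting the two covers to an exact edge decomposition, is where I expect the main obstacle to lie. \Cref{thm:blackbox} partitions a Cartesian power $S^n$, but $E(Q_n)$ is not itself a Cartesian power of any single set. Using the identity $Q_{km} = (Q_k)^m$ in the paper's graph-power notation, one does get a hybrid structure $E(Q_{km}) = \bigcup_{i=1}^m V(Q_k)^{m-1} \times_i E(Q_k)$, where the $i$-th piece consists of edges whose differing coordinate lies in the $i$-th factor. The plan would be to prove an edge-analogue of \Cref{thm:blackbox} for this disjoint-union-of-products structure, showing that approximate covers of $E(Q_k)$ by copies of $H$ lift to exact edge decompositions of $E(Q_{km})$ for some $m$. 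Adapting the iterative arguments of Gruslys--Leader--Tomon from the clean setting of $S^n$ to this mixed setting appears to require genuinely new ideas, and this lifting step, rather than the combinatorial construction of the two covers, seems to be the principal source of difficulty.
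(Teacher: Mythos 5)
This statement is a conjecture, not a theorem: the paper gives no proof of it, explicitly presents it as a conjecture made jointly with Leader and Tan, and remarks that it seems difficult even when $H$ is $Q_k$ with a single edge removed. So there is no proof in the paper to compare yours against, and your proposal must be judged on its own as an attack on an open problem. Judged that way, it is a sensible programme but not a proof, and you have correctly located---without resolving---exactly the step that makes the problem open. \Cref{thm:blackbox} is a statement about partitioning a Cartesian power $S^n$ into copies of subsets of $S$, and its proof in \cite{Gruslys2016II} is an induction that leans heavily on the product structure: a copy of a member of $\F$ lives inside a bounded set of coordinates and is constant elsewhere, and the remaining coordinates supply fresh room at each stage. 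The edge set $E(Q_{km})$ is not a power of anything; as you note it is a disjoint union of $m$ classes of the form $V(Q_k)^{m-1}\times_i E(Q_k)$, and a copy of $H$ in $Q_{km}$ will in general use edges from several classes at once, so it is not even a ``copy'' of a subset of a single factor in the sense the theorem requires. No edge analogue of \Cref{thm:blackbox} exists in the literature cited here, and supplying one is essentially the content of the conjecture, not a routine adaptation.

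There are also unverified arithmetic points earlier in your plan. In the proof of \Cref{lem:modpart} the hypothesis that $|X|$ is a power of $2$ is used at the very last step: the $2^d-1$ paths leaving $x^\ast$ give it in-degree minus out-degree $-(2^d-1)\equiv 1\pmod{2^d}$, and this congruence is precisely what fails for other moduli. In the edge setting $e(H)$ carries no such hypothesis---the divisibility condition $e(H)\mid 2^{n-1}n$ is met by choosing $n$ in a suitable congruence class, not by constraining $H$---so the parity bookkeeping you defer as ``must be reworked'' has no reason to close up for any fixed modulus $r$; it is not even clear what $r$ to aim for, since the natural candidate $r=e(H)$ need not be a power of $2$. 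Your averaging argument for the \rpart{r} is fine as far as it goes (edge-transitivity of $Q_n$ gives a uniform cover with multiplicity $2(n-1)!\,e(H)$), but refining that to a useful $r$ compatible with a \modpart{r} is not done. In short: both approximate covers are plausible but unestablished, and the lifting step is the open problem itself.
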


It seems to be difficult to prove \Cref{conj:edges} even in very special cases,
when we choose $H$ to be a fairly simple graph. For example, we do not know if
the conjecture is true when $H$ is $Q_k$ with one edge removed.

On the other hand, the case when $H$ is a path is well understood. Indeed,
the edges of $Q_n$ can be partitioned into antipodal paths of the form $(x_1,
x_2, \dotsc, x_n) \to (1 - x_1, x_2, \dotsc, x_n) \to (1 - x_1, 1 - x_2, \dotsc,
x_n) \to \dotsb \to (1 - x_1, 1 - x_2, \dotsc, 1 - x_n)$ with $x_1 + \dotsb +
x_n$ even. Therefore, $E(Q_n)$ can be partitioned into copies of $P_{k+1}$ whenever
$n$ is a multiple of $k$. Moreover, for odd $n$, Erde~\cite{Erde2014} and
Anick and Ramras~\cite{Anick2015} independently determined exactly when $E(Q_n)$
can be partitioned into copies of $P_{k+1}$: this can be done if and only if $k
\le n$ and $k | 2^{n-1}n$. For even $n$ not everything is known yet. Erde
conjectured that in this case the obviously necessary conditions $k \le 2^n$
and $k | 2^{n-1}n$ are sufficient.

\section{Acknowledgements}

We thank Shoham Letzter for discussions which resulted in
\Cref{prop:counterexample}.

\bibliography{hypercube}

\end{document}